\newtheorem{theorem}{Theorem}[section]
\newtheorem{lemma}[theorem]{Lemma}
\theoremstyle{definition}
\def\acknowledgment{\par\addvspace{17pt}\small\rmfamily
\trivlist\if!\ackname!\item[]\else
\item[\hskip\labelsep
{\bfseries\ackname}]\fi}
\def\C{\mathbb{C}}
\numberwithin{equation}{section}
\begin{document}
\title{Approximation theorems for Pascali systems}

\author{Barbara Drinovec Drnov\v sek, Uro\v s Kuzman}

\address[Barbara Drinovec Drnov\v sek]{Faculty of Mathematics and Physics, University of Ljubljana, 
and Institute of Mathematics, Physics and Mechanics, Jadranska 19, 
1000 Ljubljana, Slovenia, barbara.drinovec@fmf.uni-lj.si}

\address[Uro\v s Kuzman]{Faculty of Mathematics and Physics, University of Ljubljana, 
and Institute of Mathematics, Physics and Mechanics, Jadranska 19, 
1000 Ljubljana, Slovenia, uros.kuzman@fmf.uni-lj.si}

%

\begin{abstract}
Based on Runge theorem for generalized analytic vectors proved by Goldschmidt in 1979 we provide a Mergelyan-type and a Carleman-type approximation theorems for solutions of Pascali systems.     
\end{abstract}
\maketitle

\section{Introduction}

Given a domain $\Omega\subset \mathbb{C}$ the \emph{Pascali system} on $\Omega$ is an elliptic system which can be written in the following normalized form:
	\begin{equation}\label{Pascali}
		w_{\overline{\zeta}}+B_1 w + B_2\overline{w}=0,
	\end{equation} 
where $w\colon\Omega\to \mathbb{C}^n$ is a complex vector function, while $B_1$ and $B_2$ are $n\times n$ matrix funcions. In the scalar case $n=1$, the fundamental theory for such systems was developed by Bers \cite{BERS} and Vekua \cite{VEKUA}, thus equation \eqref{Pascali} is often called the \emph{Bers-Vekua equation}. 
In the case $n\geq 2$, the pioneering work was done by Pascali \cite{PASCALI}. Furthermore, solutions of such a system are a subclass of generalized analytic vectors, which correspond to elliptic systems with vanishing Beltrami coefficient \cite{BOJARSKI,GB, Goldschmidt, WENDLAND}. We denote them by $\mathcal{O}_{B}(\Omega)$. In general, their regularity depends on the regularity of $B_1$ and $B_2$ (see the preliminary section), however, in our theorems, we work with smooth matrix functions and thus elements of the set $\mathcal{O}_{B}(\Omega)$ are smooth vector functions. 

We shall focus on approximation theorems which originate from the classical complex function theory (see e.g. \cite{FFW}). 
We rely on a version of Runge approximation theorem for generalized analytic vectors, which was proved 
by Goldschmidt \cite[Theorem 3.1]{Goldschmidt}, and the recent work on Pascali systems by Sukhov and Tumanov \cite[\S 3]{ST}. All our results are valid for $n\geq 1$. 

We prove a Mergelyan-type approximation theorem for maps from compact admissible sets:
Let $\Omega\subset \mathbb{C}$ be an open subset and let $S\subset \Omega$ be a compact subset.
Recall that the set $S$ is \emph{Runge} in $\Omega$ if  $\Omega\setminus S$ has no relatively compact components.
We call $S$ \emph{admissible} if it is Runge in $\Omega$ and if it can be
written as $S=K\cup E$, where $K$ is a finite union of pairwise disjoint 
compact domains in $\mathbb{C}$ with smooth boundary and $E=\overline{S\setminus K}$ is a union of finitely many pairwise disjoint smooth Jordan
arcs and closed Jordan curves meeting $K$ only in their endpoints (if at all) and such that their intersection with the boundary $bK$ is transverse. Our first result is the following Mergelyan-type theorem. 

\begin{theorem} 
\label{Mergelyan}
Let $\Omega\subset \mathbb{C}$ be an open subset, let $S\subset \Omega$ be an admissible compact set, and let $B_1$ and $B_2$ be $n\times n$ matrix functions with coefficents in $\mathcal C^\infty(\Omega)$. Given $\epsilon>0$ and a smooth map  $f\colon S\to \mathbb{C}^n$  whose restriction to the interior $\mathring{S}$ lies in $\mathcal{O}_{B}(\mathring{S})$, 
there exists $w\in\mathcal{O}_{B}(\Omega)$ such that 
$$\left |f(\zeta)-w(\zeta)\right |<\epsilon \text{ for all } \zeta\in S.$$      
\end{theorem}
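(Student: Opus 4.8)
The plan is to follow the classical Mergelyan scheme adapted to the operator $\partial_{\overline\zeta}+B_1\cdot+B_2\overline{(\cdot)}$, reducing global approximation on the admissible set $S=K\cup E$ to the Runge-type theorem of Goldschmidt, which provides approximation of solutions on a Runge compact by solutions on all of $\Omega$. Thus the real work is to pass from a smooth $f$ on $S$, holomorphic-like only on $\mathring S$, to an honest element of $\mathcal O_B$ defined on a neighborhood of $S$ (or on a slightly larger admissible set) which is uniformly close to $f$ on $S$. I would first treat the two pieces separately: on the ``thick'' part $K$ the function $f$ already satisfies \eqref{Pascali} in $\mathring K$ and is smooth up to $bK$, so a standard push-out/Stokes argument lets me solve the equation in a one-sided neighborhood of $bK$ with boundary values close to $f$; on the ``thin'' part, the arcs and curves in $E$, I would thicken each arc to a thin band and use a local solvability statement for \eqref{Pascali} together with a Whitney-type extension of $f$ from the arc to the band, arranging the transversality at $bK$ so the two constructions match up to small error.

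Concretely, the key steps in order are as follows. (1) \emph{Local solvability and a similarity principle.} Recall (from Bers--Vekua theory and the preliminary section the paper refers to) that near any point one can write any solution of \eqref{Pascali} as $w=e^{s}\,h$ or more precisely that \eqref{Pascali} is, after a bounded change of frame, equivalent to a Cauchy--Riemann system; this gives local bases of solutions and a way to transfer classical one-variable approximation facts. (2) \emph{Reduction to a neighborhood.} Using (1) plus a partition of unity along $bK$ and along the arcs of $E$, construct $g\in\mathcal O_B(U)$ on an open neighborhood $U\supset S$ with $\|g-f\|_{S}<\epsilon/2$. Here I would handle $K$ by solving a $\overline\partial$-type problem with a compactly supported smooth right-hand side that corrects the failure of $f$ to extend as a solution across $bK$, and handle $E$ by the band construction above; the admissibility hypotheses (finitely many smooth arcs/curves, transverse intersection with the smooth boundary $bK$) are exactly what make these local extensions glue. (3) \emph{Make $U$ Runge.} Shrink/modify $U$ to an open set $U'$ with $S\subset U'\subset\subset\Omega$ whose closure is Runge in $\Omega$; since $S$ is Runge in $\Omega$ this is possible by filling in the relatively compact holes, using that $S$ has no relatively compact complementary components. (4) \emph{Apply Goldschmidt's Runge theorem.} Approximate $g|_{\overline{U'}}\in\mathcal O_B(U')$ uniformly on $S$ by some $w\in\mathcal O_B(\Omega)$ to within $\epsilon/2$ via \cite[Theorem 3.1]{Goldschmidt} (as used in \cite[\S 3]{ST}). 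Combining (2) and (4) yields $\|f-w\|_S<\epsilon$.

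The main obstacle I expect is Step (2), specifically the passage from a function that is merely smooth on $S$ and a solution only on the open part $\mathring S$ to a genuine solution on a full neighborhood: unlike the holomorphic case, there is no Cauchy integral formula to quote directly, so one must either invoke a solvability theorem for the inhomogeneous Pascali/Bers--Vekua system with good estimates (to kill the error term $\partial_{\overline\zeta}\tilde f+B_1\tilde f+B_2\overline{\tilde f}$ coming from an arbitrary smooth extension $\tilde f$ of $f$) or build local solution bases via the similarity principle and patch. Controlling this error in the sup-norm on $S$ — especially near the exceptional set $bK\cap E$ where the transversality is used and near the endpoints of the Jordan arcs — is the delicate point; everything else is either classical complex analysis (making $U'$ Runge) or a direct appeal to Goldschmidt's theorem. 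A secondary technical point is verifying that the band/neighborhood $U'$ can indeed be taken Runge in $\Omega$ simultaneously with being a neighborhood of the possibly ``spiky'' set $S$, but this is handled by the standard argument that an admissible set is Runge iff it has a Runge neighborhood basis.
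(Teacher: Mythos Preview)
Your plan is correct and matches the paper's route: manufacture an approximant that is a genuine $\mathcal O_B$-solution on a neighborhood of $S$, then invoke Goldschmidt's Runge theorem. The paper implements your Step~(2) via precisely the error-correction option you single out---it first builds a smooth $H$ on a neighborhood with $\bar\partial_B H|_S=0$ (using a first-order jet extension along the arcs of $E$ so that the equation holds on $E$, and the given $f$ on $\mathring K$), then lets smoothly bounded $\Omega_m\searrow S$ so that $\|\bar\partial_B H\|_{\mathcal C^{0,\alpha}(\Omega_m)}\to 0$, and corrects $H|_{\Omega_m}$ to an actual solution using a bounded right inverse of $\bar\partial_B$ coming from Sukhov--Tumanov; the similarity principle plays no role in this step.
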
 

\noindent A similar version of such an approximation theorem on admissible sets was recently obtained for conformal minimal immersions, null curves \cite{AF, AFL} and for holomorphic Legendrian curves \cite{F,AFL}. 

Carleman's approximation theorem asserts that given a continuous function $f\colon\mathbb{R}\to \mathbb{C}$ 
and a strictly positive function $\varepsilon\colon \mathbb{R}\to (0,\infty)$ there exists an entire function 
$g\colon\mathbb{C}\to \mathbb{C}$  
such that $|f(\zeta)-g(\zeta)|<\varepsilon(\zeta)$ for every $\zeta\in \mathbb{R}$. An analogue of this statement was recently proved for conformal minimal immersions \cite{CIC}. We provide a version for solutions of Pascali systems.

\begin{theorem}
\label{Carleman}
Let  $B_1$ and $B_2$ be $n\times n$ matrix functions with coefficients in $\mathcal C^\infty(\C)$.
Given a continuous map $f\colon\mathbb{R}\to \mathbb{C}^n$ 
and a strictly positive function $\varepsilon\colon \mathbb{R}\to (0,\infty)$ there exists $w\in\mathcal{O}_{B}(\mathbb{C})$ 
such that 
$$\left| f(\zeta)-w(\zeta)\right| <\epsilon(\zeta) \text{ for all } \zeta\in\mathbb{R}.$$  
\end{theorem}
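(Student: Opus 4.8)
The plan is to deduce the Carleman-type theorem from the Mergelyan-type theorem (Theorem \ref{Mergelyan}) by the classical exhaustion-and-telescoping argument, adapted to the setting of Pascali systems. First I would fix an exhaustion of $\C$ by closed disks $D_k=\{|\zeta|\le k\}$, $k\in\N$, so that $\R=\bigcup_k I_k$ with $I_k=[-k,k]$, and observe that each set $S_k:=D_k\cup I_{k+1}$ (or a slight variant — a disk together with a protruding arc) is an admissible compact set in $\C$ in the sense defined before Theorem \ref{Mergelyan}: it is Runge in $\C$, it is the union of a compact domain with smooth boundary and a Jordan arc meeting the boundary transversely. The role of the positive function $\varepsilon$ is handled by setting $\varepsilon_k:=\min\{\varepsilon(\zeta):\zeta\in I_k\}>0$ (continuity and compactness) and then choosing a rapidly decreasing sequence of tolerances $\delta_k>0$ with $\sum_{j\ge k}\delta_j<\varepsilon_{k-1}/2$ or similar, so the telescoped errors on each fixed $I_m$ are summably small.

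The inductive construction then proceeds as follows. Having produced $w_k\in\OOO_B(\C)$, I would define a smooth target on the admissible set $S_{k+1}=D_k\cup A_{k+1}$, where $A_{k+1}$ is a smooth arc running from $bD_k$ out along $\R$ to cover $I_{k+1}\setminus D_k$ and meeting $bD_k$ transversely: on $D_k$ take the map $w_k$ itself (which lies in $\OOO_B(D_k)$), and on the arc $A_{k+1}$ take a smooth map that agrees with $w_k$ near the attaching point and equals $f$ on $I_{k+1}\setminus (D_{k-1})$ say, interpolating smoothly in between; one must only ensure the resulting map is smooth on $S_{k+1}$ and restricts to an element of $\OOO_B$ on the interior $\mathring S_{k+1}=\mathring D_k$, which holds because it is $w_k$ there. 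Applying Theorem \ref{Mergelyan} gives $w_{k+1}\in\OOO_B(\C)$ with $|w_{k+1}-w_k|<\delta_{k+1}$ on $D_k$ and $|w_{k+1}-f|<\delta_{k+1}$ on $I_{k+1}$ (shrinking the portion of $I_{k+1}$ on which $f$ is literally attained, or accepting an extra $\delta_{k+1}$ there). Then $w:=\lim_k w_k$ exists uniformly on compacts of $\C$; the limit lies in $\OOO_B(\C)$ because $\OOO_B$ is a closed subspace under local uniform convergence — this follows from the integral-equation representation underlying the theory of generalized analytic vectors (the same regularity/compactness facts invoked for the Runge theorem of Goldschmidt and in \cite{ST}), so I would cite that. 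Finally, on each $I_m$ one estimates $|f-w|\le |f-w_{m+1}|+\sum_{k\ge m+1}|w_{k+1}-w_k|<\varepsilon(\zeta)$ by the choice of the $\delta_k$.

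The main obstacle I expect is not the abstract telescoping but verifying the two technical points that make it run in the Pascali setting: (i) that $\OOO_B(\C)$ is closed under locally uniform limits — for holomorphic functions this is Weierstrass, but here one needs the analogous statement for solutions of \eqref{Pascali}, which should come from the similarity-principle/integral-operator description of $\OOO_B$ (writing $w=e^{\text{(bounded)}}h$ with $h$ holomorphic, or invoking interior elliptic estimates), and (ii) that the sets $S_k$ one feeds into Theorem \ref{Mergelyan} are genuinely admissible, in particular the transversality of the arc with $bD_k$ and the Runge property of $D_k\cup A_{k+1}$ in all of $\C$ — both are arranged by choosing the arcs to leave the disk along a radial direction. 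A secondary point of care is bookkeeping the errors so that the estimate $|f-w|<\varepsilon$ holds at \emph{every} real point, including those swallowed at stage $k$ but where $\varepsilon$ is already very small; this is handled by the standard trick of demanding $\sum_{k>m}\delta_k<\tfrac12\inf_{I_m}\varepsilon$ for all $m$ simultaneously, which is possible since the $\delta_k$ may be chosen after all the $\varepsilon_m$ are known.
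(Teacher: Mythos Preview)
Your proposal is correct and follows essentially the same exhaustion--and--telescoping scheme as the paper: the paper uses the admissible sets $S_m=\{|\zeta|\le m\}\cup[-m-2,m+2]$, applies Theorem~\ref{Mergelyan} at each step, glues via a cutoff, and passes to the locally uniform limit, invoking exactly the closedness of $\mathcal{O}_B$ under such limits that you flag in point~(i). The only small item you should add explicitly is the preliminary reduction to a \emph{smooth} $f$ (the paper does this in its first line), since your target map on the arc $A_{k+1}$ must be smooth to feed into Theorem~\ref{Mergelyan}.
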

 
\noindent{\bf{Acknowledgements.}} The authors are supported in part by the research program P1-0291 and grants N1-0137, J1-9104, J1-1690, BI-US/19-21-108 from ARRS, Republic of Slovenia (Javna Agencija za Raziskovalno Dejavnost Republike Slovenije).
 
\section{Preliminaries}
Let $p>2$ and assume that the coefficients of $B_1$ and $B_2$ belong to the Lebesque class $L^p_{loc}(\Omega)$. Then one can consider weak solutions of (\ref{Pascali}) defined as vector functions $w\in L^q_{loc}(\Omega,n)$, $\frac{1}{p}+\frac{1}{q}=1$, satisfying the condition
$$\iint_{\Omega}\;\left(w^T\cdot \varphi_{\bar{\zeta}}- (B_1w+B_2\overline{w})^T\cdot \varphi\right)\;\; d\zeta d\overline{\zeta} =0$$
for all smooth vector functions $\varphi\in \mathcal{C}^\infty_0(\Omega,n)$ with compact support (note that here the minus sign arises from the integration by parts). We denote by  $\mathcal{O}^*_{B}(\Omega)\subset L^q_{loc}(\Omega,n)$ the subset of all such solutions. However, using the standard boothstrapping arguments, it turns out that $\mathcal{O}^*_{B}(\Omega)$ indeed lies in the Sobolev class $W^{1,p}_{loc}(\Omega,n)$. That is, the $L^{p}_{loc}$-regularity of $B_1$ and $B_2$ implies the $W^{1,p}_{loc}$-regularity of $w\in \mathcal{O}^*_{B}(\Omega).$ Similarly, we can take $k\in\mathbb{N}_0$ and $0<\alpha<1$ and introduce the H$\mathrm{\ddot{o}}$lder spaces $\mathcal{C}^{k,\alpha}(\Omega)$ and $\mathcal{C}^{k,\alpha}(\Omega,n).$ Again, the ellipticity of the Pascali system implies that if the coefficients of $B_1$ and $B_2$ belong to $\mathcal{C}^{k,\alpha}(\Omega)$, the solutions of (\ref{Pascali}) which are at least $L^q_{loc}$-regular automatically belong to the class $\mathcal{C}^{k+1,\alpha}(\Omega,n)$. As indicated in the introduction, we denote the set of all differentiable solutions of (\ref{Pascali}) by $\mathcal{O}_{B}(\Omega)$ and remark that in our main theorems the coefficients of $B_1$ and $B_2$ are assumed to be $\mathcal{C}^{\infty}$-regular, hence $\mathcal{O}_{B}(\Omega)\subset\mathcal{C}^{\infty}(\Omega,n)$. For more details on the regularity and boothstrapping methods see e.g. \cite{VEKUA}.

From now on we denote by
$$\bar{\partial}_B(w)=w_{\bar{\zeta}}+B_1w+B_2\bar{w},$$
where $w_{\bar{\zeta}}$ means either the usual derivative with respect to the conjugate complex variable or its weak analogue (it will be clear from the context). Let $\mathcal{D}\subset\mathbb{C}$ be an open disc and let the coefficients of $B_1$ and $B_2$ belong to the class $\mathcal{C}^{k,\alpha}(\mathcal{D})$ for some $k\in\mathbb{N}_0$ and $0<\alpha<1$. Then $\bar{\partial}_B$ can be treated as a linear operator mapping from $\mathcal{C}^{k+1,\alpha}(\mathcal{D},n)$ into the space $\mathcal{C}^{k,\alpha}(\mathcal{D},n)$. For $n=1$ this operator is known to be surjective. Indeed, let $T_\mathcal{D}$ be the classical Cauchy-Green operator for $\mathcal{D}$. Then we can introduce a bijective integral operator $\mathcal{P}_{\mathcal{D}}\colon \mathcal{C}^{k+1,\alpha}(\mathcal{D},n)\to \mathcal{C}^{k+1,\alpha}(\mathcal{D},n)$ given by 
$$\mathcal{P}_{\mathcal{D}}(w)=w+T_{\mathcal{D}}\left(B_1 w+B_2\bar{w}\right).$$
It turns out that $\mathcal{P}_{\mathcal{D}}^{-1}\circ T_{\mathcal{D}}$ is a bounded right inverse for $\bar{\partial}_B$. For $n\geq 2$, an analogue of this fact was proved only recently. In particular, in the higher dimensional case $\mathcal{P}_{D}$ may have a non-trivial kernel and has to be slightly perturbed before it can be inverted \cite[Theorem 3.1, Corolary 3.6.]{ST}.

If we combine the surjectivity of $\bar{\partial}_B$ and the extension theory for H$\mathrm{\ddot{o}}$lder spaces we obtain the following theorem concerning the approximation of vector functions with small $\bar{\partial}_B$-derivatives.

\begin{theorem}\label{t2.2}
Let $\mathcal{D}\subset\mathbb{C}$ be an open disc and assume that the coefficients of $B_1$ and $B_2$ belong to the class $\mathcal{C}^{k,\alpha}(\mathcal{D})$. Given $\epsilon>0$ there exists a constant $\delta>0$ such that given any smoothly bounded domain $\Omega\Subset\mathcal{D}$ and $g\in\mathcal{C}^{k+1,\alpha}(\overline{\Omega})$ with $\left\|\bar{\partial}_B(g)\right\|_{\mathcal{C}^{k,\alpha}(\Omega)}<\delta$ there exists $w\in \mathcal{O}_{B}(\Omega)$ such that $$\left\|w-g\right\|_{\mathcal{C}^{k,\alpha}(\Omega)}<\epsilon.$$  
\end{theorem}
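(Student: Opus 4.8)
The plan is to deduce Theorem~\ref{t2.2} from the surjectivity of $\bar\partial_B$ together with the boundedness of the right inverse and a standard extension argument for Hölder spaces. First I would fix the disc $\mathcal D$ and, using the Sukhov–Tumanov result \cite[Theorem 3.1, Corollary 3.6]{ST} (or the operator $\mathcal P_{\mathcal D}^{-1}\circ T_{\mathcal D}$ in the scalar case), obtain a bounded linear right inverse $R\colon \mathcal C^{k,\alpha}(\mathcal D,n)\to\mathcal C^{k+1,\alpha}(\mathcal D,n)$ with $\bar\partial_B\circ R=\mathrm{id}$ and operator norm $\|R\|\le C$ depending only on $\mathcal D$, $B_1$, $B_2$, $k$, $\alpha$. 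I would then set $\delta=\epsilon/C'$ for a constant $C'$ to be pinned down below (it will absorb $\|R\|$ and the norm of an extension operator).

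Next, given a smoothly bounded domain $\Omega\Subset\mathcal D$ and $g\in\mathcal C^{k+1,\alpha}(\overline\Omega)$ with $\|\bar\partial_B g\|_{\mathcal C^{k,\alpha}(\Omega)}<\delta$, I would first extend the datum $h:=\bar\partial_B g$, which is defined and $\mathcal C^{k,\alpha}$ on $\overline\Omega$, to a function $\tilde h\in\mathcal C^{k,\alpha}(\mathcal D,n)$ (say, compactly supported in $\mathcal D$) with $\|\tilde h\|_{\mathcal C^{k,\alpha}(\mathcal D)}\le C_E\|h\|_{\mathcal C^{k,\alpha}(\Omega)}$, using a Stein-type extension operator for Hölder spaces on smoothly bounded domains; the constant $C_E$ depends only on $\Omega$ and $\mathcal D$, but since we want $\delta$ independent of $\Omega$ we should instead extend across $b\Omega$ and then cut off inside a fixed slightly larger domain — more cleanly, one extends $h$ by a bounded operator whose norm can be controlled uniformly once we note that Theorem~\ref{t2.2} only asserts existence of $\delta$ for the fixed disc and allows the domain to vary; a short argument shows the extension constant can be taken uniform over smoothly bounded $\Omega\Subset\mathcal D$, or alternatively one proves the theorem with $\delta$ allowed to depend on $\Omega$ and remarks that this suffices for the applications. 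Put $u:=R(\tilde h)\in\mathcal C^{k+1,\alpha}(\mathcal D,n)$, so that $\bar\partial_B u=\tilde h$ on $\mathcal D$ and in particular $\bar\partial_B u=h=\bar\partial_B g$ on $\Omega$, with $\|u\|_{\mathcal C^{k,\alpha}(\Omega)}\le\|u\|_{\mathcal C^{k,\alpha}(\mathcal D)}\le C\,\|\tilde h\|_{\mathcal C^{k,\alpha}(\mathcal D)}\le C\,C_E\,\delta$.

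Finally set $w:=g-u|_\Omega$. Then $\bar\partial_B w=\bar\partial_B g-\bar\partial_B u=0$ on $\Omega$, i.e. $w\in\mathcal O_B(\Omega)$, and $\|w-g\|_{\mathcal C^{k,\alpha}(\Omega)}=\|u\|_{\mathcal C^{k,\alpha}(\Omega)}\le C\,C_E\,\delta$. Choosing $\delta:=\epsilon/(C\,C_E)$ (which depends only on $\mathcal D$, $B_1$, $B_2$, $k$, $\alpha$, and the extension constant) gives $\|w-g\|_{\mathcal C^{k,\alpha}(\Omega)}<\epsilon$, as desired.

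The routine parts are the algebra with $\bar\partial_B$ and the bookkeeping of constants; the one genuine point requiring care is the uniformity of the extension operator. The main obstacle is ensuring that the norm $C_E$ of the Hölder extension across $b\Omega$ (and of the subsequent cutoff landing inside the fixed disc $\mathcal D$) can be bounded independently of the particular smoothly bounded $\Omega\Subset\mathcal D$; this is where the hypothesis that $S$ (in the application) decomposes into finitely many smoothly bounded pieces will be used, and where one either invokes a known uniform extension result for domains with uniformly controlled boundary geometry or simply formulates $\delta$ as depending on $\Omega$, which is harmless for how Theorem~\ref{t2.2} is used later.
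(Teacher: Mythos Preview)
Your argument is essentially identical to the paper's: extend $\bar\partial_B g$ by a Stein-type operator, apply the bounded right inverse of $\bar\partial_B$ on $\mathcal D$ to produce $u$, and set $w=g-u|_\Omega$. The one point on which you hedge---uniformity of the extension constant $C_E$ in $\Omega$---is exactly what the paper settles by citing \cite[Theorem~4, p.~177]{Stein} to get $\|E_{k,\alpha}\|$ bounded independently of $\overline\Omega$; you should commit to this rather than fall back to a $\delta$ depending on $\Omega$, since the uniform version is both what the statement asserts and what the later applications (shrinking a sequence $\Omega_m$ down to $K$ or $S$) actually require.
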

\begin{proof} 
Note that $g$ is assumed to be regular up to the boundary of $\Omega$. Hence, we can introduce extension operators $E_{k,\alpha}\colon \mathcal{C}^{k,\alpha}(\overline{\Omega},n)\to \mathcal{C}^{k,\alpha}(\mathbb{C},n).$ It is crucial that, by \cite[Theorem 4, p.177]{Stein}, they can be chosen so that their norms are bounded by constants that are independent of $\overline{\Omega}$. In particular, there exist a constant $C_{k,\alpha}\geq 1$ such that
$$\left\|E_{k,\alpha}\left(\bar{\partial}_B(g)\right)\right\|_{\mathcal{C}^{k,\alpha}(\mathcal{D})}\leq C_{k,\alpha}\; \cdot\left\|\bar{\partial}_B(g)\right\|_{\mathcal{C}^{k,\alpha}(\overline{\Omega})}$$
for any corresponding domain $\Omega\Subset\mathcal{D}$.

Since the operator $\bar{\partial}_B\colon \mathcal{C}^{k+1,\alpha}(\mathcal{D},n)\to \mathcal{C}^{k,\alpha}(\mathcal{D},n)$ is surjective, there exist a solution $u\in\mathcal{C}^{k+1,\alpha}(\mathcal{D})$ of the non-homogeneous equation $$\bar{\partial}_B(u)=E_{k,\alpha}\left(\bar{\partial}_B(g)\right).$$ 
We restrict it to $\Omega$ and define $w=g-u|_{\Omega}.$ This is the map we seek since $w\in \mathcal{O}_{B}(\Omega)$ and 
$$\left\|w-g\right\|_{\mathcal{C}^{k,\alpha}(\Omega)}<C\cdot \delta,$$
where the constant $C>0$ arises as a product of $C_{k,\alpha}$ and the operator norm of the bounded right inverse of $\bar{\partial}_B$ on $\mathcal{D}$.
\end{proof}




We conclude this preliminary section by presenting Runge-type theorem for Pascali systems provided by Goldschmidt \cite{Goldschmidt}. For the convenience of the reader, we include its proof. The proof relies on the Riesz representation theory for Lebesque spaces, hence it is essential to work with weak solutions of (\ref{Pascali}). However, as explained above, higher regularity of $B_1$ and $B_2$ automatically improves the regularity of solutions.

\begin{theorem}
\label{osem} 
Let $U\subset \C$ be a domain and assume that the coefficients of $B_1$ and $B_2$ belong to the class $L^p_{loc}(U,n)$, $p>2$. 
Let $K\subset U$ be a compact Runge subset. Given $\epsilon>0$ and $f\in\mathcal{O}^*_{B}(\Omega)$ for some neighborhood $\Omega$ of $K$, there exists  $w\in\mathcal{O}^*_{B}(U)$ such that $$\left\|f-w\right\|_{L^\infty(K)}<\epsilon.$$
\end{theorem}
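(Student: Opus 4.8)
The plan is to follow the classical Runge strategy via duality, adapted to the operator $\bar\partial_B$. The key observation is that a bounded linear functional on a suitable space of integrable functions that annihilates $\mathcal{O}^*_B(U)$-approximants must in fact be represented by an element satisfying a formally adjoint equation; combining this with a support argument and the Runge hypothesis on $K$ will force the functional to annihilate $f$ as well, and the Hahn–Banach theorem then yields the desired approximation.

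\medskip
More concretely, first I would reduce to a local statement: it suffices to approximate $f$ by an element of $\mathcal{O}^*_B(\Omega')$ for a slightly larger domain $\Omega'\supset K$, and then push the pole (more precisely the region where the solution fails to extend) out to the boundary of $U$ by exhausting $U$ with a sequence of compact Runge sets $K=K_0\subset K_1\subset\cdots$ with $\bigcup K_j=U$ and iterating with a geometric sequence of error tolerances $\epsilon/2^{j}$. So the heart of the matter is the elementary step: given compact Runge $K\subset U$ and a relatively compact domain $\Omega''$ with $K\subset\Omega''\Subset U$, any $f\in\mathcal{O}^*_B$ near $K$ can be approximated in $L^\infty(K)$ by elements of $\mathcal{O}^*_B(\Omega'')$. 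For this step I would fix a compact set $L$ with $K\subset \mathring L\subset L\subset\Omega''$ such that $L$ is still Runge in $\Omega''$ (possible by adding to $K$ the relatively compact components of $\Omega''\setminus K$, which are finitely many and have closures in $U$), and consider the restriction map $\mathcal{O}^*_B(\Omega'')\to C(K)$, or rather into $L^\infty(K)$; I want its image to be dense in the closed subspace spanned by $f$ together with that image.

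\medskip
The duality computation is the technical core. Suppose $\mu$ is a (complex, vector-valued) measure on $K$, or more generally an element of the dual of the relevant sup-norm space, annihilating all restrictions to $K$ of functions in $\mathcal{O}^*_B(\Omega'')$. Pairing $\mu$ against the fundamental-solution kernel of $\bar\partial_B$ — equivalently, applying the bounded right inverse $\mathcal{P}_\Omega^{-1}\circ T_\Omega$ of $\bar\partial_B$ from the preliminaries, whose kernel is an $L^p_{loc}$ perturbation of the Cauchy kernel $\frac{1}{\pi(\zeta-z)}$ — produces a function $h$ on $\C\setminus K$ that satisfies the homogeneous formal adjoint equation $\bar\partial_B^{*}h=0$ off $K$ and, by the usual Cauchy-kernel decay, vanishes on the unbounded component; because $K$ is Runge in $U$, $h$ must vanish on a neighborhood of $bU$, and since $\Omega''\Subset U$ one deduces $h\equiv 0$ on the unbounded component of $\C\setminus L$ and, by unique continuation for the elliptic adjoint system, on every component of $\Omega''\setminus K$. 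Feeding this back, $\mu$ must also annihilate $f$. I expect the main obstacle to be precisely this step: making rigorous the "Cauchy transform of $\mu$" for the perturbed kernel $\mathcal{P}_\Omega^{-1}\circ T_\Omega$, verifying it solves the adjoint equation in the weak sense away from $\operatorname{supp}\mu$, and invoking unique continuation for the (elliptic, first-order) adjoint Pascali system with merely $L^p_{loc}$ coefficients — one must be careful that the relevant regularity theory and Carleman-type unique-continuation estimates apply at this level of smoothness. Once that is in place, the Hahn–Banach theorem closes the elementary step, and the exhaustion argument completes the proof.
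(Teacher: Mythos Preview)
Your proposal is essentially correct in spirit---it is a Hahn--Banach/duality argument combined with unique continuation, which is exactly the paper's strategy---but the technical execution you outline differs from the paper's, and in one way is more delicate than necessary.

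The paper does \emph{not} form a Cauchy-type transform of the annihilating functional against a fundamental-solution kernel for $\bar\partial_B$, nor does it use an exhaustion. Instead it works directly in $L^q_{loc}(\Omega,n)$, represents the annihilating functional by some $g\in L^p$ with compact support in $\Omega$, and observes via the characterization
\[
w\in\mathcal{O}^*_B(U)\iff \textrm{Re}\Bigl(\iint_U w^T\cdot\bar\partial_B^*(\varphi)\Bigr)=0\ \text{for all }\varphi\in\mathcal{C}^\infty_0(U,n)
\]
that $g$ lies in the $L^p$-closure of $\bar\partial_B^*\bigl(\mathcal{C}^\infty_0(U,n)\bigr)$. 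One then picks $\varphi_m\in\mathcal{C}^\infty_0(U,n)$ with $\bar\partial_B^*(\varphi_m)\to g$; the Similarity principle for Pascali systems (unique continuation) forces $\varphi_m$ to have support in $\Omega$, and pairing against $w_1\in\mathcal{O}^*_B(\Omega)$ gives the contradiction. This sidesteps entirely the construction of a perturbed Cauchy kernel and works at the $L^p_{loc}$ level of the coefficients without further regularity.

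Your route---annihilating measure on $K$, perturbed Cauchy transform, adjoint equation off $K$, unique continuation---is the classical complex-analytic template and would also work, but the step you flag as the obstacle (building the kernel for $\bar\partial_B^*$ and checking its properties with only $L^p_{loc}$ coefficients, especially for $n\ge 2$) is real: the right inverse $\mathcal{P}_{\mathcal D}^{-1}\circ T_{\mathcal D}$ quoted in the preliminaries is established under $\mathcal{C}^{k,\alpha}$ hypotheses, so you would need a separate argument at the $L^p$ level. The paper's more abstract bipolar argument avoids this, and also avoids the exhaustion layer, which is unnecessary here.
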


\begin{proof} 
Fix  $\epsilon>0$ and choose $f\in\mathcal{O}^*_{B}(\Omega)$ as in the theorem. If necessary we may shrink the neighborhood $\Omega$ so that we still have $K\subset \Omega$ and that $\Omega$ admits no relatively compact connected components in $U$.

Given $w\in L_{loc}^q(U,n)$, $\frac{1}{p}+\frac{1}{q}=1$, we define
$$\overline{\partial}_B^*(w)=w_{\bar{\zeta}}-B_1^Tw-\overline{B}_2^T\bar{w}.$$ 
Furthermore, let $\varphi\in \mathcal{C}^{\infty}_0(U,n)$. We have
$$\varphi^T\cdot \overline{\partial}_{B}(w)+w^T\cdot \overline{\partial}_{B}^*(\varphi)=(\varphi^T\cdot w_{\bar{\zeta}}+w^T\cdot \varphi_{\bar{\zeta}})+(\varphi^T\cdot B_2 \bar{w}-\overline{\varphi}^T\cdot \overline{B}_2w).$$ 
Hence, applying the integration by parts, we get
$$\iint_{U}\left(\varphi^T\cdot \overline{\partial}_{B}(w)+w^T\cdot \overline{\partial}_{B}^*(\varphi)\right)\;d\zeta d\overline{\zeta}=
\iint_{U}\left(\varphi^T\cdot B_2 \bar{w}-\overline{\varphi}^T\cdot\overline{B}_2w\right)\;d\zeta d\overline{\zeta}.$$
This last integral is imaginary, thus we have
\begin{equation*}\label{re2}
\textrm{Re}\left(\iint_{U}\varphi^T\cdot \overline{\partial}_{B}(w)+w^T\cdot \overline{\partial}_{B}^*(\varphi)\;d\zeta d\overline{\zeta} \right)=0,\; \forall \varphi\in \mathcal{C}^{\infty}_0(U,n).
\end{equation*}
This implies that for
 $w\in L_{loc}^q(U,n)$ we have
\begin{equation}
\label{re}
w\in \mathcal{O}^*_{B}(U)\iff \textrm{Re}\left(\iint_{U}\; w^T\cdot \overline{\partial}_B^*(\varphi)\;d\zeta d\overline{\zeta}\right)=0,\; \forall \varphi\in \mathcal{C}^{\infty}_0(U,n),
\end{equation}
where the reverse implication follows from the fact that if $\varphi\in \mathcal{C}^{\infty}_0(U,n)$,
then we also have $\imath\varphi\in \mathcal{C}^{\infty}_0(U,n)$.

Given $w\in \mathcal{O}^*_{B}(U)$ its restriction $w|_{\Omega}$ belongs to $\mathcal{O}^*_{B}(\Omega)$. We denote by
$\mathcal{O}^*_{B}(U)|_{\Omega}$ the set of all such restrictions. We would like to prove that the closure of this set in the topology of the space $L^{q}_{loc}(\Omega,n)$ is equal to the space  $\mathcal{O}^*_{B}(\Omega)$. If this holds, the desired conclusion follows from the boothstrapping argument. That is, there exists an extension $w$ of $f$ which is $L^q_{loc}$-close to $f$ on $\Omega$ and therefore $W^{1,p}$-close to $f$ on some smaller neighborhood $\Omega'$ of $K$ with smooth boundary. By the Sobolev embedding theorem such a function is $L^{\infty}$-close to $f$ on $\Omega'$ and therefore $L^\infty$-close to $f$ on $K$. 

Assume the contrary. Then, by Hahn-Banach theorem, there is a bounded linear functional $\mu\colon L^{q}_{loc}(\Omega,n)\to \mathbb{C}$ that vanishes on $\mathcal{O}^*_{B}(U)|_{\Omega}$ and an element $w_1\in  \mathcal{O}^*_{B}(\Omega)\setminus\mathcal{O}^*_{B}(U)|_{\Omega}$ such that $\mu(w_1)\neq 0$. In particular, we may assume that $\textrm{Re}\left(\mu (w_1)\right)\neq 0$ (otherwise we take $\imath\mu$). Furthermore, we can represent $\mu$ by some $g\in L^p(\Omega,n)$ such that $\textrm{supp}\;g\Subset\Omega$. Let us extend $g$ to $U\setminus \Omega$ by setting $g=0$. We have
$$\mu(w)=\iint_{\Omega} w^T\cdot g \;\;d\zeta d\overline{\zeta} =\iint_{U} w^T\cdot g \;\;d\zeta d\overline{\zeta},\;\; w\in L^{q}_{loc}(\Omega,n).$$

 Recall that $w\in \mathcal{O}^*_{B}(U)$ precisely when
$$\textrm{Re}\left(\iint_{U}\; w^T\cdot \overline{\partial}_{B}^*(\varphi)\;\;d\zeta d\overline{\zeta}\right)=0,\; \forall \varphi\in \mathcal{C}^{\infty}_0(U,n).$$
Hence the map $g\in L^p(U,n)$ belongs to the closure of the set $\overline{\partial}_{B}^*\left(\mathcal{C}^{\infty}_0(U,n)\right)$ in the topology of the space $L^{p}(U,n)$. Therefore, there exists a sequence $\left\{\varphi_m\right\}\subset \mathcal{C}^{\infty}_0(U,n)$ with $\textrm{supp}\;\overline{\partial}_{B}^*\left(\varphi_m\right)\Subset\Omega$ and such that $$\lim_{m\to\infty}{\bar{\partial}}_{B}^*\left(\varphi_m\right)= g.$$ 
We claim that $\varphi_m\in \mathcal{C}^{\infty}_0(\Omega,n)$. That is, if the supports of $\varphi_m$ are compact in $U$, they must be compact in $\Omega$ as well. Indeed, note that $\overline{\partial}_{B}^*\left(\varphi_m\right)=0$ on $U\setminus\Omega$. Moreover, $\textrm{supp}\;\varphi_m\Subset U$ implies that $\varphi_m$ vanishes on some open set. However, by the Similarity principle for Pascali systems \cite{BUCHANAN}, every solution of \eqref{Pascali} can be represented as a product of an invertible matrix function and a holomorphic vector function. This together with the fact that $U\setminus\Omega$ admits no relatively compact components implies that $\varphi_m=0$ in $U\setminus\Omega.$ 

For $w_1\in \mathcal{O}^*_{B}(\Omega)\setminus\mathcal{O}^*_{B}(U)|_{\Omega}$ chosen above we have 
$$\lim_{m\to\infty}\textrm{Re}\left(\iint_{\Omega} w_1^T\cdot \overline{\partial}_{B}^*(\varphi_m) \;\;d\zeta d\overline{\zeta}\right)= \textrm{Re}\left(\iint_{\Omega} w_1^T\cdot g \;\;d\zeta d\overline{\zeta}\right)\neq 0.$$
But then there is $l\in\mathbb{N}$ such that
$$\textrm{Re}\left(\iint_{\Omega} w_1^T\cdot \overline{\partial}_{B}^*(\varphi_l) \;\;d\zeta d\overline{\zeta}\right)\neq 0.$$
By an analogue of (\ref{re}) this contradicts the fact that $w_1\in \mathcal{O}^*_{B}(\Omega)$.
\end{proof}

\section{Proofs of Theorem \ref{Mergelyan} and Theorem \ref{Carleman}}
We begin by proving a local Mergelyan-type lemma for bounded domains.
\begin{lemma}
\label{Merg}
Let $K\subset \mathbb{C}$ be a smoothly bounded compact domain. Given $\epsilon>0$ and a smooth map  $f\colon K\to \mathbb{C}^n$  whose restriction to the interior $\mathring K$ is in $\mathcal{O}_{B}(\mathring K)$ there exist
an open neighbourhood $U$ of $K$ and $w\in\mathcal{O}_{B}(U)$ such that 
$$\left\|f-w\right\|_{L^\infty(K)}<\epsilon.$$     
\end{lemma}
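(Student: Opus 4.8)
\emph{Proof plan.} The plan is to reduce the statement to Theorem~\ref{t2.2}: I will manufacture, on a thin neighbourhood of $K$, a smooth map whose $\bar\partial_B$-image has arbitrarily small $\mathcal C^{k,\alpha}$-norm, and then invoke that theorem. First, since $f$ is smooth on the compact domain $K$, I would extend it by Whitney's extension theorem to a map $\tilde f\in\mathcal C^\infty(\mathbb C,\mathbb C^n)$, and set $h:=\bar\partial_B\tilde f$; this is smooth because $B_1,B_2$ are smooth on a neighbourhood of $K$. By hypothesis $h\equiv 0$ on the interior $\mathring K$; since $K=\overline{\mathring K}$ and every derivative $D^\gamma h$ is continuous, it follows at once that $D^\gamma h\equiv 0$ on all of $K$, i.e. $h$ vanishes to infinite order along $K$.

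Next I would fix a disc $\mathcal D\supset K$ on which $B_1,B_2$ are defined and smooth, fix some $0<\alpha<1$ (and take $k=0$, say, so that $\tilde f\in\mathcal C^{k+1,\alpha}$ and $h\in\mathcal C^{k,\alpha}$ automatically), apply Theorem~\ref{t2.2} to this $\mathcal D$ and the given $\epsilon$ to obtain the corresponding $\delta>0$, and consider the shrinking collars $U_\rho:=\{z\in\mathbb C:\mathrm{dist}(z,K)<\rho\}$. Because $bK$ is smooth, $U_\rho$ is a smoothly bounded domain for every sufficiently small $\rho$, and $U_\rho\Subset\mathcal D$ once $\rho$ is small. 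Using that $h$ vanishes to infinite order along $K$, a Taylor estimate along the segment from a point $z\in U_\rho$ to its nearest point $z^\ast\in K$ (this segment stays in $U_\rho$) gives $\|D^\gamma h\|_{L^\infty(U_\rho)}=O(\rho^N)$ for every $\gamma$ and every $N\in\mathbb N$; combining this with the uniform boundedness of the $(k+1)$-st derivatives of $h$ on a fixed collar and with the fact that the collars $U_\rho$ are uniformly quasiconvex for small $\rho$ (their boundaries converging to $bK$ in $\mathcal C^\infty$), one gets $\|h\|_{\mathcal C^{k,\alpha}(U_\rho)}\to 0$ as $\rho\to 0$. I would therefore fix $\rho$ so small that $U_\rho\Subset\mathcal D$ is smoothly bounded, $K\subset U_\rho$, and $\|\bar\partial_B\tilde f\|_{\mathcal C^{k,\alpha}(U_\rho)}<\delta$.

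Finally I would apply Theorem~\ref{t2.2} with $\Omega=U_\rho$ and $g=\tilde f|_{\overline{U_\rho}}$ to obtain $w\in\mathcal O_B(U_\rho)$ with $\|w-\tilde f\|_{\mathcal C^{k,\alpha}(U_\rho)}<\epsilon$; since $\tilde f=f$ on $K\subset U_\rho$, this yields $\|f-w\|_{L^\infty(K)}\le\|w-\tilde f\|_{L^\infty(U_\rho)}<\epsilon$, and $U:=U_\rho$ together with $w$ are as required.

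The hard part is the middle step, i.e. passing from ``$\bar\partial_B\tilde f=0$ on $\mathring K$'' to ``$\|\bar\partial_B\tilde f\|_{\mathcal C^{k,\alpha}}$ small on a thin neighbourhood of $K$''. The pointwise vanishing of $h$ and all its derivatives on $K$ is immediate once one has a $\mathcal C^\infty$ extension — this is precisely why Whitney's theorem, rather than a mere Hölder extension, is used — but the quantitative decay of the full $\mathcal C^{k,\alpha}$-norm (in particular of the order-$k$ Hölder seminorm over the possibly non-convex collars) is a routine yet slightly delicate estimate that hinges on the uniform quasiconvexity of the shrinking smooth collars $U_\rho$ and on the fact that the constant $\delta$ in Theorem~\ref{t2.2} is uniform over all subdomains $\Omega\Subset\mathcal D$.
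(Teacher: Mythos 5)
Your proposal is correct and follows essentially the same route as the paper: extend $f$ smoothly to a disc containing $K$, observe that $\bar\partial_B$ of the extension is small in $\mathcal C^{k,\alpha}$-norm on shrinking smoothly bounded neighbourhoods of $K$, and conclude by Theorem~\ref{t2.2}. The only differences are cosmetic — you use a Whitney extension where the paper uses the Stein operator $E_{1,\alpha}$, and you spell out (via the infinite-order vanishing of $\bar\partial_B\tilde f$ along $K$ and the resulting Taylor estimates) the decay of the H\"older norm that the paper merely asserts.
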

\begin{proof}
Let $\mathcal{D}\subset\mathbb{C}$ be an open disc containing $K$. We apply the operator $E_{1,\alpha}$ from the proof of Theorem \ref{t2.2} and extend $f$ into $\tilde{f}=E_{1,\alpha}(f)\in\mathcal{C}^{1,\alpha}(\mathcal{D},n)$. Let $\left\{\Omega_m\right\}$ be a sequence of smoothly bounded domains that are compactly contained in $\mathcal{D}$ and shrink towards $K$. We denote by $f_m$ the restrictions of $\tilde{f}$ to $\Omega_m$. Since $f$ is smooth on $K$ and $f|\mathring {K}\in\mathcal{O}_{B}(\mathring{K})$ we have
$$\lim_{m\to\infty}\left\|\overline{\partial}_{B}(f_m)\right\|_{\mathcal{C}^{0,\alpha}(\Omega_m)}=0.$$
Together with Theorem \ref{t2.2} this yields the desired conclusion. That is, we can set $U=\Omega_m$ for some large $m\in\mathbb{N}.$
\end{proof}

\begin{lemma}
\label{arcs}
Let $\gamma\colon [a,b] \to \mathbb{C}$ be a parametrization of a smooth Jordan arc or a closed Jordan curve.
Given a smooth map $f\colon \gamma([a,b])\to \mathbb{C}^n$
there exist an open neighbourhood $U$  of $\gamma([a,b])$ 
and a smooth map $F$ on $U$ which solves the Pascali system (\ref{Pascali}) on 
$\gamma([a,b])$ and satisfies $F|_{\gamma([a,b])}\equiv f$.
\end{lemma}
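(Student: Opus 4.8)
\textbf{Proof proposal for Lemma \ref{arcs}.}
The plan is to reduce the problem to a one-variable situation along the arc and then to thicken the resulting extension to a genuine neighbourhood, using the local solvability of $\bar\partial_B$ from Theorem \ref{t2.2} (or equivalently the bounded right inverse on discs described in the preliminaries). First I would work in a tubular neighbourhood of $\gamma([a,b])$: since $\gamma$ is a smooth embedded arc or closed curve, there is a diffeomorphism $\Phi$ from a neighbourhood $U_0$ of $\gamma([a,b])$ onto an annular or rectangular strip $\{(s,t): s\in I,\ |t|<\rho\}$ carrying $\gamma$ to $\{t=0\}$, and in these coordinates the derivative $\partial_{\bar\zeta}$ pulls back to a first-order elliptic operator with smooth coefficients. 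The point is that I only need $F$ to satisfy the Pascali system \emph{on} $\gamma$, i.e. to order zero in the transverse direction, not in a full neighbourhood, so I have the freedom to prescribe $F$ and its first transverse derivative along $\{t=0\}$.

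Concretely, I would first choose \emph{some} smooth extension $\tilde f$ of $f$ to $U_0$ (for instance via the Seeley/Stein extension as in the proof of Lemma \ref{Merg}, or simply $\tilde f(\Phi^{-1}(s,t))=f(\gamma(s))$ in the tubular coordinates). In general $\bar\partial_B(\tilde f)$ does not vanish on $\gamma$, but it is a smooth function, so I can correct it: I want to modify $\tilde f$ by a smooth map $u$ that vanishes on $\gamma$ (so that $F=\tilde f-u$ still restricts to $f$) and with $\bar\partial_B(u)=\bar\partial_B(\tilde f)$ along $\gamma$. Since $u$ must vanish on $\{t=0\}$, writing $u = t\cdot v$ in the tubular coordinates reduces $\bar\partial_B(u)|_{\gamma}$ to an algebraic expression in $v|_{\gamma}$ (the transverse derivative $\partial_t$ hits the factor $t$ and the non-vanishing transverse part of the symbol of $\bar\partial_B$ makes this invertible), so one solves pointwise along $\gamma$ for $v|_{\gamma}$, extends $v$ arbitrarily and smoothly off $\gamma$, and sets $F=\tilde f - t v$. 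Pulling back through $\Phi^{-1}$ and shrinking $U_0$ to a neighbourhood $U$ of $\gamma([a,b])$ on which everything is defined gives the claimed $F$ with $F|_{\gamma}\equiv f$ and $\bar\partial_B(F)=0$ on $\gamma$.

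The main obstacle I anticipate is purely bookkeeping: verifying that in the tubular coordinates the operator $\bar\partial_B$ really does have an invertible principal part in the transverse direction after restricting to $\gamma$, so that the equation for $v|_{\gamma}$ is solvable pointwise with smooth solution. This is where ellipticity of the Pascali system enters — the symbol of $\partial_{\bar\zeta}$ is non-vanishing in every (co)direction, in particular transverse to $\gamma$ — together with the transversality and smoothness of $\gamma$; one also has to keep track of the conjugate-linear term $B_2\bar w$, but since it contributes no derivatives it does not affect the solvability of the leading transverse equation. A minor additional point is that if $\gamma$ is a closed curve rather than an arc the tubular neighbourhood is an annulus and the coordinate $s$ is periodic, but this causes no trouble since all constructions are local and the extension $v$ off $\gamma$ can be chosen periodically. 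Everything else — smoothness of the extension, shrinking to the final neighbourhood $U$ — is routine.
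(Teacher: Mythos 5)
Your argument is correct and is essentially the paper's own: the correction $u=t\cdot v$ with $v|_\gamma$ determined pointwise from the non-vanishing of $t_{\bar\zeta}$ along the curve (i.e.\ $\gamma$ being non-characteristic for $\partial_{\bar\zeta}$, with the zeroth-order terms $B_1w+B_2\bar w$ irrelevant to solvability) is exactly the paper's local ansatz $f_x(t+\imath s)=f(t+\imath\psi(t))+f_1(t)(s-\psi(t))$, where $s-\psi(t)$ plays the role of your transverse defining function and $f_1$ the role of $v|_\gamma$. The only organizational difference is that the paper constructs the extension locally over graph charts and glues with a partition of unity, while you perform one global correction in a tubular neighbourhood; the invocation of Theorem~\ref{t2.2} in your opening is not actually needed and neither proof uses it.
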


\begin{proof}
We only consider Jordan arcs, the proof for closed Jordan curves proceeds similarly.
We can extend $\gamma$ smoothly to $[a-\eta,b+\eta]$ for some $\eta>0$. 
For any $x\in (a-\eta,b+\eta)$ there are a neighbourhood $U_x$ of $\gamma(x)$ in 
$\mathbb{C}$ and a smooth map $f_x\colon U_x \to \mathbb{C}^n$, which agrees with $f$ on 
$U_x\cap \gamma([a,b])$ and solves (\ref{Pascali}) on $U_x\cap \gamma((a-\eta,b+\eta))$. Indeed, locally near $\gamma(x)$, the set $\gamma  (a-\eta,b+\eta)$ is a graph above one of the coordinate axes. Without loss of generality, let us assume that this is the $x$-axis. Then there are a neighbourhood  $U_x=I_x\times J_x$
of $\gamma(x)$ in $\mathbb{C}$ and a map $\psi\colon I_x\to J_x$ such that
$$U_x\cap \gamma((a-\eta,b+\eta))=\{(t+\imath \psi(t)\colon t\in I_x\}.$$ 
An extension of $f$ to $U_x$ which solves (\ref{Pascali}) on $U_x\cap \gamma((a-\eta,b+\eta))$ can be found in the form
$f_x(t+\imath s)=f(t+\imath \psi(t))+f_1(t)(s-\psi(t))$, where $f_1$ is an appropriate function defined on $I_x$.

Let $U=\cup_{x\in\gamma([a,b])}U_x$ and let $\{\varphi_x\}$ be a smooth partition of unity subordinate to the covering 
$\{U_x\}_{x\in\gamma([a,b])}$. We define the map $F$ on $U$ by 
$$F=\sum_x \varphi_x f_x.$$
Then we have $F|_{\gamma([a,b])}\equiv f$ and since $f_x$ solves (\ref{Pascali}) on $U_x\cap \gamma([a,b])$, we obtain the following equality on $\gamma([a,b])$:
$$\bar{\partial}_B(F)=\sum_x (\varphi_x)_{\bar{\zeta}}f_x+\sum_x \varphi_x\bar{\partial}_B(f_x)=\sum_x (\varphi_x)_{\bar{\zeta}}f_x = f \left(\sum_x \varphi_x\right)_{\bar{\zeta}}=0.$$
That is, $F$ solves the Pascali system (\ref{Pascali}) on $\gamma([a,b])$.
\end{proof}

\begin{proof}[Proof of Theorem \ref{Mergelyan}]
Fix $\epsilon>0$ and recall that $S=K\cup E$,
where $K$ is a finite union of pairwise disjoint 
compact domains in $\mathbb{C}$ with smooth boundary and $E=\overline{S\setminus K}$ is a union of finitely many pairwise disjoint smooth Jordan
arcs and closed Jordan curves. 
By Lemma \ref{Merg} there exist 
an open neighbourhood $U$ of $K$ and $g\in\mathcal{O}_{B}(U)$ such that 
$$\left\|f-g\right\|_{L^\infty(K)}<\frac \epsilon 4.$$     
Since $f$ and $g$ are close on $K$ and continuous on $U\cap E$, 
and since Jordan curves from $E$ meet the boundary $bK$ transversally,
there is an open neighbourhood $U_1$ of $K$, 
$K\subset \overline U_1\subset U$, such that 
$$|f(\zeta)-g(\zeta)|<\frac \epsilon 2  \text{ for all } \zeta\in E\cap U_1.$$
We now glue $f$ and $g$ on $E\cap U_1$ by a smooth cut off function. That is, we choose a smoothly bounded neighbourhood $U_2$ of $K$, $K\subset \overline U_2\subset U_1$, and a smooth cut off function $\varphi\colon \mathbb{C}\to [0,1]$ 
that satisfies $\varphi|_{U_2}\equiv 1$ and ${{\rm supp}\,} \varphi\subset U_1$. We
define a smooth map $h\colon U_2\cup E\to \mathbb{C}^n$  by
$$h=\varphi \;g+(1-\varphi)f.$$ 
Note that such a map satisfies $h\equiv g$ on $U_2$, $h\equiv f$ on ${E\cap (\mathbb{C}\setminus \overline U_1)}$, and 
$$|f(\zeta)-h(\zeta)|<\frac {\epsilon} 2 \text{ for all } \zeta\in S.$$

We now use Lemma \ref{arcs} to get an open neighbourhood $V$ of $E$ 
and a smooth map $F$ on $V$ which solves (\ref{Pascali}) on 
$E$ and satisfies $F|_E\equiv h$. We glue $F$ with $h$ in order to obtain a map that admits a vanishing $\bar{\partial}_B$-derivative on $S$. Let $U_3$ be a smoothly bounded neighbourhood of $K$, 
$K\subset \overline U_3\subset U_2$, and let $\varphi\colon \mathbb{C}\to [0,1]$ be a smooth cut off function that satisfies $\varphi|_{U_3}\equiv 1$ and ${{\rm supp}\,} \varphi\subset U_2$.
Let a smooth map $H\colon U_3\cup V\to \mathbb{C}^n$ be given by
$$H=\varphi \;h+(1-\varphi)F.$$ 
We have 
$$|H(\zeta)-f(\zeta)|<\frac {\epsilon} 2 \text{ for all } \zeta\in S.$$
On $E$ we have $h=F$, $\bar{\partial}_B(h)=\bar{\partial}_B(F)=0$ and therefore
$$\bar{\partial}_B(H)=\varphi_{\bar{\zeta}}(h-F)+\varphi \;\bar{\partial}_B(h)+(1-\varphi)\bar{\partial}_B(F)=0.$$
Furthermore, the fact that $H\equiv h\equiv g$ on $U_3$ implies that $H\in {\mathcal O}_{B}(U_3)$. In particular, for every $\zeta\in S$ we have $\bar{\partial}_B(H)(\zeta)=0$.

We now proceed as in the proof of Lemma \ref{Merg} and set $\left\{\Omega_m\right\}$ to be a sequence of smoothly bounded domains that are compactly contained in $U_3\cup V$ and shrink towards $S$. We denote by $H_m$ the restriction of $H$ to the set $\Omega_m$. We have
$$\lim_{m\to\infty}\left\|\bar{\partial}_B(H_m)\right\|_{\mathcal{C}^{0,\alpha}(\Omega_m)}=0.$$
Therefore, by Theorem \ref{t2.2}, for $m\in\mathbb{N}$ that is large enough we can find $w_m\in\mathcal{O}_{B}(\Omega_m)$ such that
$$\left\|w-H_m\right\|_{\mathcal{C}^{1,\alpha}(\Omega_m)}<\frac{\epsilon}{4}.$$
Finally, by Runge theorem \ref{osem}, there exists $w\in \mathcal{O}_{B}(\Omega)$ such that
$$\left\|w-w_m\right\|_{L^\infty(S)}<\frac{\epsilon}{4}.$$
This is the map we seek. 
\end{proof}

Finally, we prove Carleman theorem using Mergelyan's theorem for 
Pascali systems following the usual inductive construction
\cite{FFW}.

\begin{proof}[Proof of Theorem \ref{Carleman}]
Without loss of generality we may assume that the starting map $f$ is smooth (otherwise work with its smooth approximation).
For $m\in\mathbb{N}$ we define the sets
$$S_m=\{\zeta\in\mathbb{C}\colon|\zeta|\le m\}\cup [-m-2,m+2],$$ 
$$\textstyle{\Omega_m=\{\zeta\in\mathbb{C}\colon|\zeta|< m+\frac 13\},}$$
and a decreasing sequence
$$\varepsilon_m=\min\{\varepsilon(\zeta)\colon |\zeta|\le m+2\}.$$

Let $f_0(\zeta)=f(\zeta)$ for  $\zeta\in \mathbb{R}$.
We construct inductively
a sequence of smooth maps $f_m\colon \Omega_m\cup \mathbb{R}\to \mathbb{C}^n$, $m\geq 1$, which satisfies the following properties:
\begin{enumerate}
\item[i)] $f_m\in \mathcal{O}_{B}(\Omega_m)$,
\item[ii)] $f_m(\zeta)=f(\zeta)$ for $\zeta\in \mathbb{R}$ such that $|\zeta|\ge m+\frac 23$,
\item[iii)] $|f_m(\zeta)-f_{m-1}(\zeta)|<\frac{\varepsilon_{m-1}}{2^{m+1}}$ for all
$\zeta\in S_{m-1}$.
\end{enumerate}
Let us fix $m\ge 1$ and assume that we have already constructed the map $f_{m-1}$. The set $S_{m-1}$ is admissible. Hence, by Theorem  \ref{Mergelyan}, there is $g_m\in \mathcal{O}_{B}(\mathbb{C})$ such that for every $\zeta\in S_{m-1}$ we have the following estimate: $$|g_m(\zeta)-f_{m-1}(\zeta)|<\frac{\varepsilon_{m-1}} {2^{m+1}}.$$
Let $\varphi_m\colon \mathbb{C}\to [0,1]$ be a smooth cut off function
such that $\varphi_m\equiv 1$ on $\Omega_m$ and $\varphi_m(\zeta)=0$ for $|\zeta|\ge m+\frac 23$. We define $f_m\colon \Omega_m\cup\mathbb{R}\to\mathbb{C}^n$ by setting
$$f_m=\varphi_m \;g_m+(1-\varphi_m)f_{m-1}.$$ 
Note that such a function satisfies the properties i), ii) and iii).

It follows from iii) that for each $m,k\ge 1$ and $\zeta \in S_m$ we have
\begin{equation}
|f_{m+k}(\zeta)-f_m(\zeta)|\le\sum_{i=1}^k |f_{m+i}(\zeta)-f_{m+i-1}(\zeta)|\le
\sum_{i=1}^k\frac{\varepsilon_{m+i-1}}{2^{m+i+1}}\le \frac{\varepsilon_{m}}{2^{m+1}}  . \label{pr1}
\end{equation}
Therefore the sequence $f_m$ converges uniformly on compact sets in $\mathbb{C}$. This, together with the fact that $f_m\in\mathcal{O}_B(\Omega_m)$, implies the existence of its limit $w\in  \mathcal{O}_{B}(\mathbb{C})$ (the uniform convergence implies that $w$ is a weak solution of (\ref{Pascali}) on every compact subset of $\mathbb{C}$). Finally, by ii) and  (\ref{pr1}) we have for every $\zeta$,  $|\zeta|\in[m+1,m+2]$, the following estimate:
$$|w(\zeta)-f(\zeta)|=|w(\zeta)-f_m(\zeta)|\le \sum_{i=1}^\infty\frac{\varepsilon_{m+i-1}}{2^{m+i+1}}\le \frac{\varepsilon_{m}}{2^{m+1}}<\varepsilon(\zeta).$$
Hence $w$ is the vector function we seek.
\end{proof}

\end{document}